\renewcommand\spe[1]{\rspe{#1}}
\renewcommand\D[1]{\rD{#1}}
\newcommand{\vn}{\varnothing}
\begin{document}

\title{Schurian-finiteness of blocks of type $A$ Hecke algebras II}
\author{\begin{tabular}{cc}Sin\'ead Lyle&Liron Speyer\\[3pt]
{\normalsize University of East Anglia,}&{\normalsize Okinawa Institute of Science and Technology,}\\
{\normalsize Norwich Research Park, Norwich NR4 7TJ, UK}&{\normalsize Tancha, Okinawa, Japan 904-0495}\\[3pt]
{\normalsize\texttt{\normalsize s.lyle@uea.ac.uk}}&{\normalsize\texttt{\normalsize liron.speyer@oist.jp}}
\end{tabular}}

\renewcommand\auth{Sin\'ead Lyle \& Liron Speyer}

\runninghead{Schurian-finiteness of blocks of type $A$ Hecke algebras II}

\msc{20C08, 05E10, 16G10}

\toptitle

\begin{abstract}
For any algebra $A$ over an algebraically closed field $\bbf$, we say that an $A$-module $M$ is Schurian if $\End_A(M) \cong \bbf$.
We say that $A$ is Schurian-finite if there are only finitely many isomorphism classes of Schurian $A$-modules, and Schurian-infinite otherwise.
In this paper, we build on the work of Ariki and the second author to show that all blocks of type $A$ Hecke algebras of weight at least $2$ in quantum characteristic $e \geq 3$ are Schurian-infinite.
This proves that if $e \geq 3$ then blocks of type $A$ Hecke algebras are Schurian-finite if and only if they are representation-finite.
\end{abstract}

\section{Introduction}

We let $\bbf$ denote an algebraically closed field of characteristic $p\geq 0$ throughout.
For a finite-dimensional algebra $A$ over $\bbf$, a left $A$-module $M$ is called Schurian if $\End_A(M) = \bbf$.
An algebra $A$ is called Schurian-finite if the number of isomorphism classes of Schurian $A$-modules is finite.
Recent results of Demonet, Iyama, and Jasso \cite[Theorem~4.2]{DIJ} tell us that Schurian-finiteness coincides with $\tau$-tilting finiteness, opening up new avenues for proving that an algebra is Schurian-finite or Schurian-infinite.
Typically, studies of $\tau$-tilting finiteness focus on bound quiver algebras and so do not study algebras for which a bound quiver presentation is difficult, as is the case for most `natural' algebras whose representation theory is studied.

In the prequel to this paper~\cite{as22}, Ariki and the second author initiated the study of Schurian-finiteness for Hecke algebras of symmetric groups.
Since blocks of weight $0$ and $1$ are known to have finite representation type, and therefore are Schurian-finite, blocks of weight at least $2$ were studied.
In quantum characteristic $e \geq 3$ it was proved that all weight $2$ and $3$ blocks are Schurian-infinite in any characteristic, as are many families of blocks of weight at least $4$, including all principal blocks.
It was conjectured in \cite[Conjecture~6.9]{as22} that \emph{all} blocks of weight at least $2$ are Schurian-infinite when $e \geq 3$.
In this paper, we build on the methods developed there to prove this conjecture; in other words we prove that all blocks of weight at least $4$ are Schurian-infinite in any characteristic when $e \geq 3$.
Hence we prove the following result.

\begin{thmx} \label{thm:all}
Every block of weight at least $2$ of type $A$ Hecke algebras in quantum characteristic at least $3$ is Schurian-infinite.
\end{thmx} 

Combined with the results of \cite{as22} which prove the result for weights $w=2$ and $w=3$, Theorem A follows immediately from the four  \cref{thm:mostblocks,thm:mostblocks32,thm:w4main,thm:wlargemain}.
\cref{thm:mostblocks} proves all cases of weight $w \geq 4$ except the exceptional case $e=3$ and $p=2$.
The same proof actually proves many cases when $e=3$, $p=2$, given in \cref{thm:mostblocks32}.
\cref{thm:w4main} deals with the case that $e=3$, $p=2$ and $w=4$.
Finally, \cref{thm:wlargemain} shows that the theorem holds when $e=3$, $p=2$ and $w \geq 5$.

\cref{thm:all} is the best that we could have hoped for when $e\neq 2$, since the blocks of type $A$ Hecke algebras have finite representation type in quantum characteristic $e=0$, or if they are of weight $w=0$ or $w=1$, by \cite{enreptype}.
Hence we have the following result.

\begin{thmx}\label{cor:repfin}
In quantum characteristic $e\neq 2$, blocks of type $A$ Hecke algebras are Schurian-finite if and only if they have finite representation type.
\end{thmx}

The paper is organised as follows.
In \cref{sec:background} we will give a brief background of some of our notational conventions and some useful abacus combinatorics, such as Scopes equivalences and their graded analogue proved in the prequel.
We also recall row- and column-removal for graded decomposition matrices.
However, a more detailed background on the representation theory of Hecke algebras, their graded decomposition numbers, and the related combinatorics, may be found in \cite{as22}.
Most importantly, we recall \cite[Proposition~2.18]{as22}, which allows us to deduce Schurian-infiniteness of a block in characteristic $p$ from the existence of a certain submatrix of the graded decomposition matrix in characteristics $0$ and $p$.

\cref{sec:main} contains the main results of the paper.
We begin with the case where $e\neq 3$ or $p\neq 2$.
Here we are able to quickly prove our main result, essentially by applying row-removal to reduce the problem to weight $2$, where we may easily find the necessary submatrices of the graded decomposition matrix using the results of \cite{as22}.
This trick doesn't work if $e=3$ and $p=2$, since the reduction by row-removal may land in the weight $2$ Rouquier block, for which no such submatrix exists.
Once we rule out the cases where this doesn't happen, so that the same argument may go through unchanged, we prove that weight $4$ Rouquier blocks, as well as three blocks that are in `nearby' Scopes classes, are Schurian-infinite, by directly computing applicable submatrices using the Jantzen sum formula and a known description of decomposition numbers in Rouquier blocks.
Finally, we use these results to prove that all weight $4$ blocks are Schurian-infinite, and rely on these same submatrices to prove our main result for weight at least $5$, in each case by using row-removal to reduce down to some weight $2$, $3$, or $4$ case.

\begin{ack}
The first author was supported by an LMS Emmy Noether Fellowship grant which enabled her to visit OIST where most of this research took place.
We thank the LMS for their support and OIST for their hospitality.
The second author is partially supported by JSPS Kakenhi grant number 20K22316.
We thank Susumu Ariki for helpful comments on an early draft, and Matt Fayers for his GAP code and for making his LaTeX style file for abacus displays publicly available.
\end{ack}

\section{Background}\label{sec:background} 

For brevity, we refer to \cite[Section~2]{as22} for the necessary standard background on partitions, Hecke algebras, and their graded decomposition numbers.
We will be interested in Hecke algebras of \emph{quantum characteristic} $e\geq 3$.
However, since we will deal with blocks of arbitrarily large weight, we must employ some new conventions for partitions and their abacus displays.

Suppose $\la = (\la_1, \la_2, \dots, \la_k)$ is a partition, with $\la_k >0$, and let $r \geq k$.
Then for $1\leq i \leq r$, we define the beta numbers $\beta_i = \la_i - i + r$.
We may then draw the $e$-runner abacus display with $r$ beads by marking positions on $e$ runners, starting with $0$ in the top left position and adding a bead at position $\beta_i$ for all $1 \leq i \leq r$.

An $e$-core $\rho$ is a partition whose abacus display has all beads pushed as far up their runners as possible, as in the example below.
The abacus display for a weight $w$ partition $\la$ can be obtained from that of its core $\rho$ by sliding beads down their runners, for a total of $w$ bead slides.

It is well-known that blocks of the Hecke algebra $\hhh$ are indexed by a core $\rho$ and a weight $w\geq 0$, and we will denote such a block by $B(\rho,w)$.

If $B$ is a block with core $\rho$, we take an abacus display for $\rho$ and define the integers $p_{e-1} > p_{e-2} > \dots > p_1 > p_0$ so that each is the position of the lowest bead on one of the runners.
We will use an $e$-quotient notation for our partitions, adapted so that it follows the ordering on $p_i$ as above.
Ordering the runners starting with the runner containing the $p_{e-1}$ position, and then the runner containing the $p_{e-2}$ position, and so on until the $p_0$ runner, we may read a partition from each runner, considered as a $1$-runner abacus display.
We will denote the empty partition by $\varnothing$, and use the shorthand notation $\varnothing^k$ to denote a string of $k$ components, each equal to the empty partition, in the $e$-quotient notation for a partition $\la$.

\begin{eg}
Let $e=4$, $\rho = (5,2^2)$, $\la = (9^2,3^2,1)$, and take $r=7$.
Then $\rho$ has beta numbers $(11,7,6,3,2,1,0)$, while $\la$ has beta numbers $(15,14,7,6,3,1,0)$, which are displayed on an abacus as below.
\[
\rho: \; \abacus(lmmr,bbbb,nnbb,nnnb,nnnn)
\qquad\qquad\qquad
\la: \; \abacus(lmmr,bbnb,nnbb,nnnn,nnbb)
\]
The partition $\la$ has weight $4$.
In the abacus display for its core, $\rho$, we can read off the integers $p_0 = 0$, $p_1 = 1$, $p_2 = 6$, and $p_3 = 11$.
In $e$-quotient notation, we write $\la = ((1),(2,1), \varnothing^2)$.
\end{eg}

While the $e$-quotient is usually only well-defined up to some cyclic permutation, our variant that orders the runners in terms of the integers $p_i$ is unique.
In other words, the $e$-quotient notation for a partition $\la$ does not change if we choose a different $r$, though the abacus display itself does.

Many of our results use partial decomposition matrices.
We denote by $d^{e,p}_{\la\mu}(v)$ the graded multiplicity of the simple module $\D\mu$ in the Specht module $\spe\la$, where the Hecke algebra is of quantum characteristic $e$ and the underlying field has characteristic $p \geq 0$.
We will capitalise on the following row- and column-removal theorems frequently in order to reduce the computation of graded decomposition numbers to those in weights 3 and 4.

\begin{thmc}{cmt02}{Theorem 1}\label{thm:rowremFock}
Let $\la = (\la_1, \la_2, \dots,\la_r)$ and $\mu = (\mu_1, \mu_2, \dots, \mu_s)$.
\begin{enumerate}[label=(\roman*)]
\item
If $\la_1 = \mu_1$, let $\bar\la = (\la_2, \dots, \la_r )$ and $\bar\mu =  (\mu_2, \dots, \la_s)$.
Then $d_{\la\mu}^{e,0}(v) = d_{\bar\la \bar\mu}^{e,0}(v)$.

\item
If $r=s$ and $\la_r, \mu_r >0$ (i.e.~$\la_1' = \mu_1'$), let $\bar\la = (\la_1-1, \la_2-1, \dots, \la_r-1)$ and $\bar\mu = (\mu_1-1, \mu_2-1, \dots, \mu_r-1)$.
Then $d_{\la\mu}^{e,0}(v) = d_{\bar\la \bar\mu}^{e,0}(v)$.
\end{enumerate}
\end{thmc}

\begin{thmc}{Donkin}{4.2(9) and 4.2(15)}\label{thm:rowremDecomp}
If $\la$, $\mu$, $\bar\la$, and $\bar\mu$, are as in either case above, then $d_{\la\mu}^{e,p}(1) = d_{\bar\la \bar\mu}^{e,p}(1)$.
\end{thmc}

We will make heavy use of the following graded Scopes equivalences, which generalise the equivalences introduced by Scopes~\cite{scopes} for symmetric groups and by Jost~\cite{jost} for their Hecke algebras.

When applying the Scopes equivalence, we number the runners of a chosen abacus display $0, 1, \dots, e-1$, so that position $i$ is on runner $i$ for each $i=0,1,\dots,e-1$.

Suppose that $B$ is a block of $\hhh$ of weight $w$, and core $\rho$, and that for some $i$ an abacus display for $\rho$ (or equivalently, for any partition in $B$) has $k$ more beads on runner $i$ than on runner $i-1$, for some $k\geq w$.
Let $A$ be the block of $\hhh[n-k]$ of weight $w$ and core $\Phi(\rho)$, whose abacus display is obtained from that of $\rho$ by swapping runners $i$ and $i-1$.
We may define this map $\Phi$ in the same way for any partition $\la \in B$.
That is, we swap runners $i$ and $i-1$ of the corresponding abacus display for $\la$, yielding a partition $\Phi(\la) \in A$.

If $i=0$, we actually want to swap runners $0$ and $e-1$, and in doing so we need $k+1$ more beads on runner $0$ than runner $e-1$.
We will favour changing $r$ to avoid the need for this exceptional treatment.

Scopes showed that this $\Phi$ is a bijection from $B$ to $A$, that maps $e$-regular partitions to $e$-regular partitions.
We will use the following graded lift of Scopes's and Jost's result.

\begin{propc}{as22}{Proposition~3.2}\label{prop:scopes}
If $B$ and $A$ are blocks of $\hhh$ and $\hhh[n-k]$ as above, with $k\geq w$, then they are graded Morita equivalent, and the equivalence is realised via $\D\la \leftrightarrow \D{\Phi(\la)}$.
In particular, if $\la$ and $\mu$ are partitions of $n$, with $\mu$ $e$-regular, then for any $p\geq0$, $d_{\la\mu}^{e,p}(v) = d_{\Phi(\la)\Phi(\mu)}^{e,p}(v)$.
\end{propc}

The following proposition is our main tool for proving the results of this paper.
It allows us to deduce that a block of $\hhh$ is Schurian-infinite from a submatrix of its graded decomposition matrix.

\begin{propc}{as22}{Proposition~2.18}\label{prop:matrixtrick}
Suppose that $e\geq3$ and $\bbf$ has characteristic $p\geq 0$.
If a submatrix of the graded decomposition matrix in characteristic $0$ is one of the following matrices, and $d_{\la\mu}^{e,p}(1) = d_{\la\mu}^{e,0}(1)\in \{0,1\}$ holds, for all $e$-regular partitions $\la, \mu$ that label rows of the submatrix, then the block in which those partitions belong is Schurian-infinite.

\begin{multicols}{3}
\begin{equation*}\label{targetmatrix}\tag{\(\dag\)}
\begin{pmatrix}
1\\
v & 1\\
0 & v & 1\\
v & v^2 & v & 1
\end{pmatrix}
\end{equation*}
\begin{equation*}\label{targetmatrixalt}\tag{\(\ddag\)}
\begin{pmatrix}
1\\
v & 1\\
v & 0 & 1\\
v^2 & v & v & 1
\end{pmatrix}
\end{equation*}
\begin{equation*}\label{targetmatrixstar}\tag{\(\spadesuit\)}
\begin{pmatrix}
1\\
0 & 1\\
v & v & 1\\
0 & v^2 & v & 1\\
v^2 & 0 & v & 0 &1
\end{pmatrix}
\end{equation*}
\end{multicols}
\end{propc}

\section{Main Results}\label{sec:main}

It was shown in \cite{as22} that weight $2$ blocks are Schurian-infinite, by breaking into five cases depending on the the comparative values of $p_{e-1}$, $p_{e-2}$, and $p_{e-3}$ for the block's core $\rho$.
When $p_{e-1} - p_{e-2} >e$, handled in \cite[Sections~4.3 and 4.5]{as22}, the methods in characteristic $2$ did not directly appeal to \cref{prop:matrixtrick}, as no such submatrices are available in such cases.
Instead, results of Fayers about extensions between simples \cite{faywt2} were used directly, and when $p_{e-2} - p_{e-3} > e$ and $e=3$, separate ad hoc methods were required.

We will initially avoid the case $e=3$, $p=2$, and thus have a bit more flexibility to work with.
The following result essentially deduces the same result as \cite[Sections~4.3 and 4.5]{as22} -- that $B(\rho,2)$ is Schurian-infinite when $p_{e-1} - p_{e-2} >e$ -- using \cref{prop:matrixtrick} directly, under the additional assumption that we're not in this difficult $e=3$, $p=2$ case.
This will be useful to quickly prove that all blocks of weight at least $4$ are Schurian-infinite, in conjunction with those submatrices already determined in \cite[Section~4]{as22}.

\begin{lem}\label{lem:wt2rouquish}
Suppose $e\geq 4$ and $\rho$ is a core satisfying $p_{e-1} - p_{e-2} >e$.
Define four partitions as follows.
\[
\la^{(1)} = ((1),(1),\varnothing^{e-2}), \qquad \la^{(2)} = ((1),\varnothing,(1),\varnothing^{e-3}).
\]
\begin{enumerate}[label=(\roman*)]
\item
If $p_{e-2} - p_{e-3} <e$, then define 
\[
\la^{(3)} = (\varnothing,(2),\varnothing^{e-2}), \qquad \la^{(4)} = (\varnothing,\varnothing,(2),\varnothing^{e-3}).
\]

\item
If $p_{e-2} - p_{e-3} >e$, then define 
\[
\la^{(3)} = (\varnothing,(1^2),\varnothing^{e-2}), \qquad \la^{(4)} = (\varnothing,(1),(1),\varnothing^{e-3}).
\]
\end{enumerate}
Then the four partitions $\la^{(1)}$, $\la^{(2)}$, $\la^{(3)}$, and $\la^{(4)}$ give (\ref{targetmatrixalt}) as a submatrix of the graded decomposition matrix in any characteristic.
It follows from \cref{prop:matrixtrick} that $B(\rho,2)$ is Schurian-infinite.
\end{lem}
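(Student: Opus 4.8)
The plan is to establish that the four specified partitions label rows and columns of a common block, compute the relevant entries of the graded decomposition matrix, verify they assemble into the matrix $(\ddag)$, and then invoke \cref{prop:matrixtrick}. The key is that all four partitions have weight $2$ and live in $B(\rho,2)$: each is obtained from the core $\rho$ by exactly two bead-slides (reading the $e$-quotient notation, each partition is a size-$2$ multipartition), so they all genuinely belong to the same block. I would first check that $\la^{(1)},\dots,\la^{(4)}$ are all $e$-regular, so that they index both rows and columns and the submatrix in question is indeed a principal $4\times 4$ submatrix; this is a routine verification on the abacus once the bead configurations are written out.

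The main computational step is to determine the $4\times 4$ graded decomposition submatrix in characteristic $0$. Here I would exploit the hypothesis $p_{e-1}-p_{e-2}>e$, which forces the relevant runners to be far apart on the abacus, so that the local combinatorics governing these small partitions decouples into essentially independent weight-$1$ pieces on separate runners. Concretely, the partitions $\la^{(1)}$ and $\la^{(2)}$ differ only in which low runner carries the second bead-slide, while $\la^{(3)}$ and $\la^{(4)}$ (in either case (i) or (ii)) place both slides together. I expect the decomposition numbers to be computable directly either from the known LLT/Fock-space combinatorics in these Rouquier-like configurations, or by a short adjustment-free argument using the canonical basis: the gap condition guarantees that no $p$-adic or higher corrections intrude, so the characteristic-$0$ matrix is governed purely by $v$-deformed counts of addable/removable nodes. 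The two sub-cases (i) and (ii), distinguished by whether $p_{e-2}-p_{e-3}<e$ or $>e$, simply swap the roles of a horizontal domino $(2)$ versus a vertical domino $(1^2)$, and by the analogous gap argument both yield the same submatrix $(\ddag)$.

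The second ingredient for \cref{prop:matrixtrick} is the ungraded condition $d^{e,p}_{\la\mu}(1)=d^{e,p}_{\la\mu}(1)=d^{e,0}_{\la\mu}(1)\in\{0,1\}$ for all four partitions in characteristic $p$. Since we have assumed $e\geq 4$ (so we are outside the troublesome $e=3,p=2$ regime), I would argue that the characteristic-$p$ decomposition numbers agree with those in characteristic $0$: the entries of $(\ddag)$ evaluated at $v=1$ are all $0$ or $1$, and the gap hypothesis again rules out any adjustment-matrix contributions that could alter these small numbers. This is exactly the point at which the restriction $e\geq 4$ (rather than $e\geq 3$) is being used, mirroring the discussion in the paragraph preceding the lemma about the Rouquier-block obstruction when $e=3,p=2$.

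\textbf{The main obstacle} I anticipate is the explicit computation of the characteristic-$0$ submatrix: one must carefully translate the $e$-quotient descriptions of $\la^{(1)},\dots,\la^{(4)}$ back into partitions, identify which is dominance-largest (to fix the row/column ordering so the $1$'s sit on the diagonal and the matrix is lower-triangular), and then read off the four off-diagonal $v$-powers correctly. The structure of $(\ddag)$ demands a specific pattern of zeros and powers ($v,v,0$ in the lower rows together with a single $v^2$), so getting the ordering and the individual LLT coefficients exactly right — rather than merely up to permutation — is where the care is needed. Once that submatrix is in hand, the invocation of \cref{prop:matrixtrick} is immediate.
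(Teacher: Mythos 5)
Your characteristic-$0$ computation is workable in spirit but rests on a heuristic rather than a mechanism: for arbitrary $e\geq 4$ you cannot simply assert that the ``gap'' makes the combinatorics decouple, and in case (i) the block is not even Rouquier-like (since $p_{e-2}-p_{e-3}<e$), so formulas for Rouquier blocks do not apply. The paper's route is concrete: apply the James--Mathas runner-removal theorem \cite[Theorem~3.2]{jm02} $e-4$ times to reduce to four runners, observe that up to Scopes equivalence the resulting partitions lie in one of five explicit weight-$2$ blocks (cores $(3)$, $(4,1^2)$, $(4,1^3)$ in case (i), and $(5,2^2)$, $(6,3^2,1^3)$ in case (ii)), and then run the LLT algorithm in those finitely many cases. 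This part of your proposal could probably be repaired by inserting such a reduction, but as written it is not a proof.

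The genuine gap is your treatment of positive characteristic, above all $p=2$. The lemma claims the submatrix is (\ref{targetmatrixalt}) \emph{in any characteristic} -- and \cref{thm:mostblocks} invokes it precisely in the $p=2$, $e\geq 4$ cases -- so $p=2$ cannot be waved away. Your two assertions, that $e\geq 4$ puts us ``outside the troublesome regime'' and that ``the gap hypothesis rules out any adjustment-matrix contributions,'' are not valid arguments: there is no general principle by which the abacus gap $p_{e-1}-p_{e-2}>e$ forces the characteristic-$2$ adjustment matrix of a weight-$2$ block to be trivial, and weight-$2$ adjustment matrices in characteristic $2$ are in general nontrivial. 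What is true, and what the paper uses, is that for $p\geq 3$ the decomposition matrix of a weight-$2$ block coincides with the characteristic-$0$ one (a known theorem about weight $2$, independent of any gap condition), whereas for $p=2$ one must verify the four relevant entries explicitly using Fayers' characteristic-$2$ results for weight-$2$ blocks \cite[Corollary~2.4]{faywt2}. The hypothesis $e\geq 4$ does not let you bypass this verification; rather, the verification happens to succeed when $e\geq 4$, while for $e=3$ the entries genuinely change in characteristic $2$ and no suitable submatrix exists at all. As written, your proposal contains no argument that covers $p=2$, which is the hardest and most essential part of the lemma.
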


\begin{proof}
We essentially argue as in the proofs in \cite[Section~4]{as22}.
We may first apply \cite[Theorem 3.2]{jm02} $e-4$ times to determine $d_{\la\mu}^{e,0}$ by examining the leading four runners alone.
One may check that up to Scopes equivalence, the remaining partitions live in one of five blocks, with cores $(3)$, $(4,1^2)$, and $(4,1^3)$ satisfying $p_{e-2} - p_{e-3} <e$, and cores $(5,2^2)$ and $(6,3^2,1^3)$ satisfying $p_{e-2} - p_{e-3} >e$.
Since we are in a block of weight 2, the decomposition matrix in characteristic $p$ for $p \geq 3$ is identical to the decomposition matrix in characteristic $0$.
Hence in each case, we may easily verify, for example by the LLT algorithm \cite{LLT}, that the corresponding submatrices are each (\ref{targetmatrixalt}) if $p\neq 2$.
By applying \cite[Corollary 2.4]{faywt2}, we may check that these submatrices are in fact identical when $p=2$.
\end{proof}

\subsection{When $e\neq 3$ or $p \neq 2$}

\begin{thm}\label{thm:mostblocks}
Let $e\geq 3$, and $w\geq 4$, and let $\rho$ be an $e$-core.
Unless $e=3$ and $p = 2$, the weight $w$ block $B(\rho,w)$ of $\hhh$ is Schurian-infinite.
\end{thm}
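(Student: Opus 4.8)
We want to show that any weight $w \geq 4$ block $B(\rho, w)$ is Schurian-infinite, excluding only the case $e = 3$, $p = 2$. The natural strategy, foreshadowed in the introduction, is to reduce from weight $w$ down to weight $2$ by row-removal, and then invoke the weight $2$ results — either \cref{lem:wt2rouquish} or the submatrices already computed in \cite[Section~4]{as22}. The point is that \cref{prop:matrixtrick} only needs a suitable $4$-row (or $5$-row) submatrix of the graded decomposition matrix, and \cref{thm:rowremFock,thm:rowremDecomp} let us produce such a submatrix in the larger block from one in a smaller block, in both characteristic $0$ and characteristic $p$ simultaneously.

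**The plan.** First I would exploit the Scopes equivalence (\cref{prop:scopes}) to normalise the block. By repeatedly applying Scopes moves, I would like to arrange the core $\rho$ into a convenient form — ideally one where some runner carries many more beads than its neighbour, so that the associated partitions have a long common first row (or first column) that can be stripped off. Concretely, the aim is to find four (or five) partitions $\la^{(1)}, \dots, \la^{(4)}$ in $B(\rho, w)$ that realise submatrix~\eqref{targetmatrixalt} (or \eqref{targetmatrix}, \eqref{targetmatrixstar}), and whose graded decomposition numbers can be computed by reducing to weight $2$. The cleanest route is to take partitions that agree in their first $w - 2$ rows (these shared rows being forced large by the Scopes normalisation), so that $w - 2$ applications of \cref{thm:rowremFock}(i) reduce each decomposition number $d_{\la^{(j)}\la^{(k)}}^{e,0}(v)$ to the corresponding number in a weight $2$ block. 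By \cref{thm:rowremDecomp}, the same removal is valid for the ungraded ($v = 1$) decomposition numbers in characteristic $p$, so the hypothesis $d_{\la\mu}^{e,p}(1) = d_{\la\mu}^{e,0}(1) \in \{0,1\}$ of \cref{prop:matrixtrick} transfers down to the weight $2$ block as well.

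**Closing the argument.** Once we are in weight $2$, \cref{lem:wt2rouquish} supplies the submatrix \eqref{targetmatrixalt} whenever $p_{e-1} - p_{e-2} > e$ (and we are not in the excluded $e = 3$, $p = 2$ case), while for the remaining configurations of $p_{e-1}, p_{e-2}, p_{e-3}$ the required submatrices were already exhibited in \cite[Section~4]{as22}. Thus the four partitions we chose upstairs give the desired submatrix of the graded decomposition matrix of $B(\rho, w)$, and the characteristic-$p$ condition holds by the row-removal transfer; \cref{prop:matrixtrick} then yields Schurian-infiniteness. The key constraint to watch is that row-removal must land us in a weight $2$ block for which such a submatrix \emph{exists}: as noted in the introduction, when $e = 3$ and $p = 2$ the reduction can land in the weight $2$ Rouquier block, where no applicable submatrix is available — which is precisely why that case is excluded here and deferred to \cref{thm:mostblocks32,thm:w4main,thm:wlargemain}.

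**The main obstacle.** I expect the genuine difficulty to be the combinatorial bookkeeping of the first step: choosing the partitions $\la^{(j)}$ in $B(\rho, w)$ so that (a) they share enough rows to reduce all the way to weight $2$ after row-removal, (b) the resulting weight $2$ partitions are exactly those realising a known submatrix, and (c) the Scopes normalisation needed to guarantee a long shared row is compatible with avoiding the forbidden weight $2$ Rouquier block. Verifying that a single choice of partitions works uniformly across the different configurations of $p_{e-1} - p_{e-2}$ and $p_{e-2} - p_{e-3}$, rather than needing separate ad hoc families, is where the care lies; the decomposition-number computations themselves are then routine given \cref{lem:wt2rouquish} and \cite[Section~4]{as22}.
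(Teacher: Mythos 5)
You have correctly identified the paper's strategy: concentrate the excess weight so it can be stripped off by row removal (\cref{thm:rowremFock,thm:rowremDecomp}), land on known weight-$2$ submatrices from \cite[Section~4]{as22} and \cref{lem:wt2rouquish}, transfer the characteristic-$p$ hypothesis of \cref{prop:matrixtrick} along the same removal, and exclude $e=3$, $p=2$ because the reduction may land in the weight-$2$ Rouquier block. But there is a genuine gap: what you defer as ``the main obstacle'' is not routine bookkeeping; it is the entire content of the paper's proof. The paper's mechanism is to slide the \emph{lowest} bead of the abacus display down $w-2$ spaces, so that a single first row carries all of the excess weight and one application of row removal lands in weight $2$ (rather than your $w-2$ removals of shared rows, which would require arranging $w-2$ rows each carrying weight inside one block). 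The real work is then an eleven-case analysis (Table~\ref{tab1}) on the differences $p_{e-1}-p_{e-2}$, $p_{e-2}-p_{e-3}$, $p_{e-1}-p_{e-3}$, $p_{e-1}-p_{e-4}$ of the core, with an explicit quartet (once, a quintet) of partitions in each case. This case split is forced: deleting the lowest bead lowers that runner's lowest-bead position by $e$, so the runner ordering of the weight-$2$ block one lands in --- and hence which of the configurations of \cite[Sections~4.1--4.5]{as22} or \cref{lem:wt2rouquish} applies, and which partitions must be chosen upstairs --- genuinely depends on $\rho$. In particular, contrary to your hope for a single uniform family, no one choice of partitions works across all configurations; the paper uses different families case by case, and it is exactly this analysis that isolates the two configurations ($p_{e-1}-p_{e-2}>2e$) where $e=3$, $p=2$ cannot be handled.

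A second, smaller point: your proposed preliminary Scopes normalisation (\cref{prop:scopes}) is not used by the paper and does not help here. Scopes moves do not create the long first row --- that comes from spending $w-2$ units of weight on the lowest bead, which is possible for any core --- and for general $e$ and $w$ reducing to Scopes representatives would still leave an unbounded (in $e$) family of configurations to check, i.e.\ exactly the case analysis you were hoping to bypass. So your outline matches the paper's route, but without the explicit partitions and the case-by-case verification of where row removal lands, the theorem is not yet proved.
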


\begin{proof}
Our method of proof will be to reduce to weight 2 and then apply the results of \cite[Section~4]{as22}.
In order to do this, we will slide the lowest bead on the abacus display of $\rho$ down $w-2$ spaces on each of four partitions (five in case (v)), and then use the row-removal results of \cref{thm:rowremFock,thm:rowremDecomp} to remove this bead (i.e.~the first row of each of the four partitions).
To then apply results of \cite{as22}, we must know the position of the bead above the lowest bead on the abacus display for $\rho$, in order to know the new ordering on runners.
If $p_{e-1}-p_{e-2} > e$, then the ordering on runners is unchanged -- this is reflected in our first seven cases below, five of which are essentially identical to the five subsections of \cite[Section~4]{as22} once we've removed the first row, while the other two are handled by \cref{lem:wt2rouquish}.

Thus we will list the four partitions required in each case below.
The proof then follows by applying the known decomposition numbers in weight 2, where our chosen partitions were shown in \cite{as22} and \cref{lem:wt2rouquish} to satisfy the conditions of \cref{prop:matrixtrick}.

We summarise our results in Table \ref{tab1}, where the following shorthand is used: 
\begin{align*}
&\alpha \text{ denotes } p_i -p_j <e, && \beta \text{ denotes } e<p_i -p_j <2e, \\
 &\gamma \text{ denotes } 2e < p_i-p_j,&& \delta \text{ denotes } e< p_i -p_j.\qedhere
\end{align*}

\afterpage{%
    \clearpage
    \thispagestyle{empty}
\begin{landscape}
\begin{center}
\renewcommand{\arraystretch}{1.2}
\[\begin{array}{|m{10pt}m{10pt}m{10pt}|m{15pt}m{15pt}|m{10pt}m{10pt}m{10pt}|m{15pt}m{15pt}|ll|ll|l} \hline
\multicolumn{3}{|c|}{p_{e-1}-p_{e-2}} & \multicolumn{2}{c|}{p_{e-2}-p_{e-3}} & 
\multicolumn{3}{c|}{p_{e-1}-p_{e-3}} & \multicolumn{2}{c|}{p_{e-1}-p_{e-4}} & \multicolumn{4}{c|}{} \\ 
$\alpha$ &$\beta$&$\gamma$ &\multicolumn{1}{c}{\alpha} &\multicolumn{1}{c|}{\delta} & $\alpha$ &  $\beta$& $\gamma$ & \multicolumn{1}{c}{\alpha} & \multicolumn{1}{c|}{\delta} & \multicolumn{4}{c|}{} \\ \hline 
&$\bullet$&&\multicolumn{1}{c}{\bullet}&&&$\bullet$&&&&((w-2,2), \vn^{e-1}) & ((w-2),(2),\vn^{e-2}) & \text{(\ref{targetmatrix})} &\text{\cite[\S~4.1]{as22}} \\
&&&&&&&&&&((w-2),\vn,(2),\vn^{e-3}) & ((w-2,1),\vn,(1),\vn^{e-3}) && \\ \hline 
&$\bullet$&&\multicolumn{1}{c}{\bullet} &&&&$\bullet$&&&((w-2,2), \vn^{e-1}) & ((w-2),(2),\vn^{e-2}) & \text{(\ref{targetmatrix})} &\text{\cite[\S~4.2]{as22}}\\
&&&&&&&&&&((w-2,1),(1),\vn^{e-2}) & ((w-2,1^2),\vn^{e-1}) &&\\ \hline 
&&$\bullet$&\multicolumn{1}{c}{\bullet}&&&&&& &((w-2,2), \vn^{e-1}) & ((w-2,1),(1),\vn^{e-2}) & \text{(\ref{targetmatrix})} &\text{\cite[\S~4.3]{as22}} \hspace{22pt} p\neq2\\
&&&&&&&&&&((w-2),(2),\vn^{e-2}) & ((w-2),\vn,(2),\vn^{e-3}) &  & \\ \hline 
&&$\bullet$&\multicolumn{1}{c}{\bullet}&&&&&& &((w-2,1),(1), \vn^{e-2}) & ((w-2,1),\vn,(1),\vn^{e-3}) &\text{(\ref{targetmatrixalt})} & \text{\cref{lem:wt2rouquish}} \hspace{28pt} p=2  \\
&&&&&&&&&&((w-2),(2),\vn^{e-2}) & ((w-2),\vn,(2),\vn^{e-3}) & &\hspace{82pt} e \geq 4 \\ \hline 
&$\bullet$ & & &$\bullet$ &&&&&& ((w-2,2),\vn^{e-1}) & ((w-2),(2),\varnothing^{e-2}) & \text{(\ref{targetmatrix})} &\text{\cite[\S~4.4]{as22}}\\
&&&&&&&&&&((w-2,1),(1),\vn^{e-2}) & ((w-2,1^2),\vn^{e-1}) && \\ \hline
&&$\bullet$ &&$\bullet$ &&&&&& ((w-2,2),\vn^{(e-1)}) & ((w-2,1^2),\vn^{e-1}) &  \text{(\ref{targetmatrixstar})} &\text{\cite[\S~4.5]{as22}} \hspace{22pt} p\neq 2\\
&&&&&&&&&&((w-2,1),(1),\vn^{e-2}) & ((w-2),(2),\vn^{e-2}) & &\\
&&&&&&&&&&((w-2),(1^2),\vn^{e-2}) &&&  \\ \hline 
&&$\bullet$ &&$\bullet$ &&&&&& ((w-2,1),(1),\vn^{(e-2)}) & ((w-2,1),\vn,(1),\vn^{e-3}) &  \text{(\ref{targetmatrixalt})} &\text{\cref{lem:wt2rouquish}} \hspace{27pt} p =2\\
&&&&&&&&&&((w-2),(1^2),\vn^{e-2}) & ((w-2),(1),(1),\vn^{e-3}) & &\hspace{81pt} e \geq 4 \\ \hline
&&&&&&&&\multicolumn{1}{c}{\bullet} && ((w-2),(2),\vn^{e-2}) & ((w-2),\vn,(2),\vn^{e-3}) & \text{(\ref{targetmatrix})} &\text{\cite[\S~4.1]{as22}} \\
&&&&&&&&&&((w-2),\vn^2,(2),\vn^{e-4}) & ((w-2),(1),\vn,(1),\vn^{e-4}) && \\ \hline
&&&&& $\bullet$&&&& \multicolumn{1}{c|}{\bullet} & ((w-2),(2),\vn^{e-2}) & ((w-2),\vn,(2),\vn^{e-3}) &  \text{(\ref{targetmatrix})} &\text{\cite[\S~4.1]{as22}}\\
&&&&&&&&&&((w-2,1),\vn^{e-1}) & ((w-2,1),(1),\vn^{e-2}) && \\ \hline 
$\bullet$ &&&\multicolumn{1}{c}{\bullet} &&& $\bullet$ &&&& ((w-2),(2),\vn^{e-2}) &((w-2,2),\vn^{e-1}) &  \text{(\ref{targetmatrix})} &\text{\cite[\S~4.1]{as22}}\\
&&&&&&&&&& ((w-2),\vn,(2),\vn^{e-3}) & ((w-2),(1),(1),\vn^{e-3}) && \\ \hline 
$\bullet$ &&& & $\bullet$ && && &&((w-2),(2),\vn^{e-2}) & ((w-2,2),\vn^{e-1}) &  \text{(\ref{targetmatrix})} &\text{\cite[\S~4.2 and \S~4.4]{as22}} \\ 
&&&&&&&&&&((w-2,1),(1),\vn^{e-2}) & ((w-2),(1^2),\vn^{e-3}) &&  \\ \hline 
\end{array}\]
\captionof{table}{A case by case analysis of \cref{thm:mostblocks}} \label{tab1}
\end{center}
\end{landscape}
    \clearpage
}
\end{proof}

\begin{eg}
Let $w=e=5$ and $p\geq 0$.
Take $\rho=(10,6,4,3,2^2,1^4)$ and consider the block $B(\rho,5)$.
The abacus display of $\rho$ is given by
\[\abacus(lmmmr,bbbbb,nbbbb,nbbnb,nbnnb,nnnnb)\]
and we see that 
$p_0=0$, $p_1=8$, $p_2=12$, $p_3=16$ and $p_4=24$ so that
$e<p_{e-1} - p_{e-2} <2e$, $p_{e-2}-p_{e-3}<e$ and $p_{e-1}-p_{e-3}>2e$
and we are in the second case in Table \ref{tab1}.
We therefore take the partitions given by the following four abacus displays.
\[
\abacus(lmmmr,bbbbb,nbbbb,nbbnb,nbnnn,nnnnn,nnnnb,nnnnn,nnnnb) \qquad 
\abacus(lmmmr,bbbbb,nbbbb,nbbnb,nnnnb,nnnnn,nbnnn,nnnnn,nnnnb) \qquad 
\abacus(lmmmr,bbbbb,nbbbb,nbbnb,nnnnn,nbnnb,nnnnn,nnnnn,nnnnb) \qquad 
\abacus(lmmmr,bbbbb,nbbbb,nbbnn,nbnnb,nnnnb,nnnnn,nnnnn,nnnnb)
\]
To compute the partial decomposition matrix indexed by these partitions, we first apply row removal, as described in \cref{thm:rowremFock,thm:rowremDecomp}, to remove the first bead from each abacus configuration.
We are then in a block of weight 2 and we follow \cite[Section~4.1]{as22} to show that the partial decomposition matrix indexed by these partitions is (\ref{targetmatrix}) in characteristic $0$, and that the corresponding ungraded decomposition numbers are characteristic-free, so that applying \cref{prop:matrixtrick} yields that the block $B(\rho,5)$ is Schurian-infinite.
%
\end{eg}

\subsection{When $e=3$ and $p=2$}
We now consider the remaining cases.
For the remainder of this section, {\bf fix } $\boldsymbol{e=3}$ {\bf and } $\boldsymbol{p=2}$.

We will index weight $w$ blocks $B(\rho,w)$, with core $\rho$, by tuples $[1,s_1,s_2]$, where $\rho$ has an abacus display in which the $p_0$ is placed in the top left position, so that there is just one bead on the leftmost runner, while the middle and right-hand runners have $s_1\geq 1$ and $s_2 \geq 1$ beads on, respectively.
By applying the Scopes equivalences of \cref{prop:scopes}, we may further assume that $s_1 \leq w$ and $s_2 \leq s_1 + w - 1$.
In this way, the triples $[1,s_1,s_2]$ index Scopes classes of blocks of a fixed weight, and we thus identify the triples with the corresponding Scopes classes.
We will sometimes write $B([1,s_1,s_2],w)$ for the associated block, rather than $B(\rho,w)$.

Note that we may also make the following observation to simplify the task of proving that each $B(\rho,w)$ is Schurian-infinite.
If we replace the core $\rho$ with its conjugate, $\rho'$, then $B(\rho,w)$ is isomorphic to $B(\rho',w)$, with the isomorphism being induced by the $\#$-automorphism on $\hhh$.
This argument was used in \cite[Corollary~6.5]{as22}.
In such a situation, we will say that two Scopes classes are conjugate.

\begin{thm}\label{thm:mostblocks32}
Let $w\geq 4$, and let $\rho$ be an $3$-core.
Unless $\rho$ satisfies $p_{2} - p_{1} >2e$, the weight $w$ block $B(\rho,w)$ of $\hhh$ is Schurian-infinite.
\end{thm}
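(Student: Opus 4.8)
<br>

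The plan is to mirror the structure of the proof of \cref{thm:mostblocks}, reducing from arbitrary weight $w\geq4$ down to weight $2$ via row-removal, but being careful about exactly which weight $2$ Scopes class we land in. When $e=3$ we only have three runners, so the relevant core data is captured by the two gaps $p_2-p_1$ and $p_1-p_0$; the hypothesis excludes precisely the case $p_2-p_1>2e$. First I would slide the lowest bead on the abacus for $\rho$ down $w-2$ spaces in each of four (or five) chosen partitions, exactly as in \cref{thm:mostblocks}, and then invoke \cref{thm:rowremFock,thm:rowremDecomp} to delete that bead and land in a genuine weight $2$ block. The point of the hypothesis is to guarantee that, after this reduction, the weight $2$ block we reach is \emph{not} the weight $2$ Rouquier block in the $e=3$, $p=2$ setting — the one block for which no submatrix of the form (\ref{targetmatrix}), (\ref{targetmatrixalt}), or (\ref{targetmatrixstar}) is available with characteristic-free ungraded numbers.

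The key step is therefore a case analysis on the gaps $p_2-p_1$ and $p_1-p_0$, analogous to but shorter than Table~\ref{tab1}, since with $e=3$ there are far fewer possibilities. For each admissible configuration I would exhibit four explicit partitions (in the $e$-quotient notation) whose row-removed images sit in a weight $2$ block already handled in \cite[Section~4]{as22} or in \cref{lem:wt2rouquish}. Concretely, when $p_2-p_1<e$ one is in a ``small gap'' weight $2$ class and the partitions from \cite[\S4.1,\S4.2,\S4.4]{as22} apply directly after row-removal; when $e<p_2-p_1<2e$ one is in the intermediate case and may again borrow the corresponding weight $2$ partitions. In each case \cref{prop:matrixtrick} applies because the weight $2$ computation shows the relevant submatrix is (\ref{targetmatrix}), (\ref{targetmatrixalt}), or (\ref{targetmatrixstar}) in characteristic $0$, with matching ungraded numbers in characteristic $2$, and row-removal (via \cref{thm:rowremFock,thm:rowremDecomp}) transports both the graded entries in characteristic $0$ and the ungraded entries in characteristic $2$ up to weight $w$ unchanged.

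The main obstacle is the boundary behaviour of the reduction: I must check that sliding the lowest bead down and then removing it does not inadvertently change the ordering of the runners (and hence the identity of the target weight $2$ Scopes class) in an uncontrolled way. Because $e=3$, the only genuinely problematic target is the weight $2$ Rouquier block, and the condition $p_2-p_1\leq 2e$ is exactly what rules this out after the reduction — so the bulk of the work is verifying, gap by gap, that the row-removed partitions land in a permissible class. A secondary subtlety, inherited from \cref{lem:wt2rouquish}, is that that lemma is stated for $e\geq4$, so in the intermediate regime $e<p_2-p_1<2e$ with $e=3$ I cannot simply cite it; instead I would appeal directly to the weight $2$ analysis of \cite[\S4.3,\S4.5]{as22}, checking via \cite[Corollary~2.4]{faywt2} that the characteristic $2$ ungraded numbers agree with those in characteristic $0$ for the chosen partitions. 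Once these verifications are assembled, \cref{prop:matrixtrick} delivers Schurian-infiniteness in every case not excluded by the hypothesis.
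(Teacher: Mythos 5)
Your overall plan---rerun the proof of \cref{thm:mostblocks} and note that the hypothesis $p_2-p_1\leq 2e$ eliminates exactly the troublesome cases---is precisely the paper's proof: the paper observes that cases (iii) and (v) of Table~\ref{tab1} (the rows with $p_{e-1}-p_{e-2}>2e$) cannot occur, and that every remaining row goes through with the identical partitions. However, your treatment of the intermediate regime $e<p_2-p_1<2e$ rests on a concrete misunderstanding of where the difficulty lies. Row removal deletes the lowest bead of the abacus display, so it lowers the leading gap by exactly $e$: when $e<p_2-p_1<2e$, the reduced partitions lie in a weight $2$ block whose core has leading gap $(p_2-e)-p_1<e$. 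Such weight $2$ blocks are the ones treated in \cite[\S\S 4.1, 4.2, 4.4]{as22}, whose submatrices are characteristic-free, so no repair is needed in this regime at all. The rows of Table~\ref{tab1} that invoke \cref{lem:wt2rouquish} (for $p=2$, $e\geq 4$) or \cite[\S\S 4.3, 4.5]{as22} (for $p\neq 2$) are exactly those with $p_{e-1}-p_{e-2}>2e$, i.e.\ the cases excluded by the hypothesis---not the intermediate ones.

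Worse, the fallback you propose for the intermediate regime---borrowing the partitions of \cite[\S\S 4.3, 4.5]{as22} and checking characteristic $2$ agreement via \cite[Corollary~2.4]{faywt2}---would be unsound if it were ever needed. Those sections concern weight $2$ cores with $p_{e-1}-p_{e-2}>e$, which arise from the reduction only when the original gap exceeds $2e$; and for $e=3$, $p=2$ the corresponding weight $2$ Scopes classes $[1,2,3]$ (Rouquier) and $[1,1,2]$ admit \emph{no} submatrix usable in \cref{prop:matrixtrick}. This is stated at the start of \cref{sec:main} and in the remark following the theorem, and it is the entire reason the hypothesis is imposed and the later results (\cref{prop:Rouqblocks,prop:NearRouqblocks,thm:w4main,thm:wlargemain}) are needed. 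Relatedly, the Rouquier class is not ``the one block'' lacking such a submatrix: $[1,1,2]$ lacks one too; its Schurian-infiniteness in \cite{as22} was established via extensions between simples, which is information that does not transport up through row removal. Your proof becomes correct once the erroneous fallback is deleted and replaced by the observation that none is required: under the hypothesis, every reduction lands in a weight $2$ class covered by \cite[\S\S 4.1, 4.2, 4.4]{as22}, and those submatrices pull back unchanged via \cref{thm:rowremFock,thm:rowremDecomp}.
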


\begin{proof}
The proof is almost identical to that of \cref{thm:mostblocks}.
Cases (iii) and (v) there do not appear here, since we have assumed that $p_{2} - p_{1} < 2e$.
For all other cases, we may take exactly the same partitions as in that proof, and obtain the result in the same way.
\end{proof}

\begin{rem}
When the weight is $4$, the above theorem covers all Scopes classes except for the following nine:
\begin{alignat*}{3}
&[ 1, 1, 3 ], \qquad &&[ 1, 1, 4 ], \qquad &&[ 1, 2, 4 ]\\
&[ 1, 2, 5 ], &&[ 1, 3, 5 ], &&[ 1, 3, 6 ]\\
&[ 1, 4, 1 ], &&[ 1, 4, 6 ], &&[ 1, 4, 7 ].
\end{alignat*}
If we tried to argue by the same method, our row-removal to reduce to weight 2 would leave us computing in the weight 2 Scopes classes $[1,2,3]$ and $[1,1,2]$.
For the latter, no submatrix of the decomposition matrix will do what we need, but in \cite[Section~4.3]{as22} we argued directly by looking at extensions between simples.
One could instead argue that is it Schurian-infinite since the conjugate Scopes class $[1,2,2]$ is.
For the former -- the Rouquier block -- the class is self-conjugate, and the extensions do not suffice, so we had to find other means of directly proving that the block is Schurian-infinite.
\end{rem}

We next deal with some special cases when $w=4$.
Recall that the Rouquier block for weight $4$ is the block with Scopes class $[1,4,7]$.

\begin{prop}\label{prop:Rouqblocks}
Let $B(s,4)$ be the Rouquier block of weight $4$.
We define the following four partitions in $B(s,4)$ by their $3$-quotients.

\begin{align*}
\la^{(1)} &= ((1^{2}),(1^2),\varnothing), 
&\la^{(2)} &= ((1),(2,1),\varnothing),\\
\la^{(3)} &= ((1),(1^3),\varnothing), &
\la^{(4)} &= (\varnothing,(2,1^2),\varnothing).
\end{align*}
Then the partial decomposition matrix corresponding to these four partitions in both characteristic $0$ and characteristic $2$ is equal to (\ref{targetmatrixalt})
and hence $B(s,4)$ is Schurian-infinite.
\end{prop}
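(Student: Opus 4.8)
The plan is to verify that the four partitions $\la^{(1)}, \dots, \la^{(4)}$ yield the matrix $(\ddag)$ in both characteristic $0$ and characteristic $2$, and then quote \cref{prop:matrixtrick}. For a Rouquier block of weight $4$, the decomposition numbers are governed by well-understood combinatorics, so the computation should be explicit rather than structural. First I would record the actual partitions (not just their $3$-quotients) by drawing abacus displays, and check that all four are $3$-regular so that they legitimately index both rows and columns of the relevant submatrix. Note the four partitions are ordered so that $\la^{(1)}$ dominates down to $\la^{(4)}$, matching the lower-unitriangular shape of $(\ddag)$.

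The core of the argument is the characteristic-$0$ computation of the graded decomposition numbers $d^{3,0}_{\la^{(i)}\la^{(j)}}(v)$. For Rouquier blocks, these are computed by the adjustment-free Jantzen--Schaper/LLT combinatorics, and more directly by the explicit description of decomposition numbers in Rouquier blocks in terms of the $e$-quotients (as promised in the introduction, where the authors say they use ``a known description of decomposition numbers in Rouquier blocks''). I would feed the four $3$-quotients into that description, or equivalently run the LLT algorithm on the canonical basis element for each $\la^{(i)}$, and read off the coefficient of each $\la^{(j)}$. The target is to confirm the $(\ddag)$ pattern: ones on the diagonal, the specific $v$'s and $v^2$'s in the stated positions, and the two zeros (the $(3,2)$ and implicit entries) exactly where $(\ddag)$ demands.

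The characteristic-$2$ step is to show the \emph{ungraded} decomposition numbers $d^{3,2}_{\la^{(i)}\la^{(j)}}(1)$ agree with $d^{3,0}_{\la^{(i)}\la^{(j)}}(1)$, which is precisely the hypothesis $d^{e,p}_{\la\mu}(1) = d^{e,0}_{\la\mu}(1) \in \{0,1\}$ needed in \cref{prop:matrixtrick}. For Rouquier blocks this is the cleanest part: the decomposition numbers of Rouquier blocks are characteristic-free (the adjustment matrix is trivial), a fact that can be invoked directly or justified via the Jantzen sum formula, which in a Rouquier block forces the decomposition numbers to match those in characteristic $0$. I would state this, cite the known result, and confirm that every relevant entry lies in $\{0,1\}$ after setting $v=1$ — which it does, since all the polynomial entries of $(\ddag)$ specialise to $0$ or $1$ at $v=1$.

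The main obstacle I expect is purely bookkeeping rather than conceptual: correctly translating the $3$-quotients into partitions and ensuring the LLT/Rouquier-combinatorics output is assembled into the submatrix with rows and columns in the intended order, so that the zeros land in exactly the positions prescribed by $(\ddag)$ rather than by $(\dag)$. A secondary subtlety is confirming $3$-regularity of all four partitions, since only $e$-regular partitions label columns, and the matrix-trick hypothesis is stated for $e$-regular $\la,\mu$; if any $\la^{(i)}$ were $3$-singular the argument would need adjustment. Once the characteristic-$0$ matrix is verified to be $(\ddag)$ and the characteristic-freeness of Rouquier decomposition numbers supplies the $p=2$ agreement, Schurian-infiniteness follows immediately from \cref{prop:matrixtrick}.
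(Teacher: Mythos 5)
Your characteristic-$0$ step matches the paper exactly (run the LLT algorithm, or use the Leclerc--Miyachi closed formula \cite{leclercmiyachi} for Rouquier blocks), and your reading of what \cref{prop:matrixtrick} requires is correct. The gap is in your characteristic-$2$ step. You assert that ``the decomposition numbers of Rouquier blocks are characteristic-free (the adjustment matrix is trivial).'' No such fact is available here, and it is false in this setting: characteristic-freeness of Rouquier-block decomposition numbers is only known when the characteristic exceeds the weight (the abelian-defect regime), whereas here $p=2<4=w$, and for $p\le w$ the adjustment matrix of a Rouquier (RoCK) block is genuinely nontrivial --- it contains decomposition matrices of Schur algebras in characteristic $p$. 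Your fallback, that ``the Jantzen sum formula \dots forces the decomposition numbers to match those in characteristic $0$,'' is also not a valid global argument; the sum formula can at best be used entry by entry, and only in favourable situations.

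What the paper actually does to bridge characteristics is the real content of the proof. It invokes the graded adjustment equation $d^{e,p}_{\la\mu}(v) = d^{e,0}_{\la\mu}(v) + \sum_{\nu \triangleleft \mu} d^{e,0}_{\la\nu}(v)a_{\nu\mu}(v)$ of \cite[Theorem~5.17]{bk09}, together with the Rouquier-block-specific result \cite[Proposition~4.4]{jlm} that $a_{\nu\mu}(v)=0$ unless $|\nu^{(i)}|=|\mu^{(i)}|$ for all $i$. Among the four chosen partitions, the only pair with matching $3$-quotient component sizes is $\la^{(2)}=((1),(2,1),\varnothing)$ and $\la^{(3)}=((1),(1^3),\varnothing)$ (both of sizes $(1,3,0)$), so the two partial matrices can differ only through the single adjustment entry $a_{\la^{(3)}\la^{(2)}}(v)$. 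That one entry is then killed by a targeted Jantzen sum formula argument: there is no $\sigma$ in the block with $\la^{(3)} \triangleleft \sigma \triangleleft \la^{(2)}$, and the Jantzen coefficient $J_{\la^{(3)}\la^{(2)}}$ vanishes, hence $d^{3,2}_{\la^{(3)}\la^{(2)}}=0$, which forces $a_{\la^{(3)}\la^{(2)}}=0$. Without this two-step argument (or something equivalent), your proposal establishes only the characteristic-$0$ half of the statement, and the hypothesis $d^{3,2}_{\la\mu}(1)=d^{3,0}_{\la\mu}(1)$ needed for \cref{prop:matrixtrick} remains unproved.
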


\begin{proof}

It may easily be checked, using either the LLT algorithm or the explicit formula for decomposition numbers for Rouquier blocks in characterstic 0 \cite[Corollary~10]{leclercmiyachi}, that the partial decomposition matrix is as described in characteristic $0$.
Recall \cite[Theorem~5.17]{bk09} that there exists a lower unitriangular matrix $A =(a_{\la\mu})$ - the graded adjustment matrix - with entries in $\mathbb{N}[v]$ and rows and columns indexed by the $e$-regular partitions such that
\begin{equation}
d^{e,p}_{\la\mu}(v) = d^{e,0}_{\la\mu}(v) + \sum_{\nu \triangleleft \mu} d^{e,0}_{\la\nu}(v)a_{\nu\mu}(v).\label{eqn:adj} 
\end{equation} 
Suppose $\nu,\mu \in B(\rho,4)$ have respective $3$-quotients $(\nu^{(0)},\nu^{(1)},\nu^{(2)})$ and $(\mu^{(0)},\mu^{(1)},\mu^{(2)})$.
Since $B(s,4)$ is a Rouquier block, by \cite[Proposition~4.4]{jlm} we have  $a_{\nu\mu}(v) = 0$ unless $|\nu^{(i)}|=|\mu^{(i)}|$ for $i=0,1,2$.
Putting these results together, we see that $d_{\la^{(i)}\la^{(j)}}^{3,2}(v) = d_{\la^{(i)}\la^{(j)}}^{3,0}(v)$ unless $j=2$, with
\[
d^{3,2}_{\la^{(i)} \la^{(2)}}(v) = d^{3,0}_{\la^{(i)}\la^{(2)}}(v) + d^{e,0}_{\la^{(i)}\la^{(3)}} (v) a_{\la^{(3)} \la^{(2)}}(v),
\]
so that the partial decomposition matrices agree if $a_{\la^{(3)}\la^{(2)}}=0$.
If $d^{3,2}_{\la^{(3)}\la^{(2)}}=0$ then certainly $a_{\la^{(3)} \la^{(2)}}=0$, so we find this decomposition number.
Note that there does not exist $\sigma \in B(s,4)$ with $\la^{(3)} \triangleleft \sigma \triangleleft \la^{(2)}$ so since the Jantzen coefficient $J_{\la^{(3)}\la^{(2)}}$ is equal to $0$ we also have $d^{3,2}_{\la^{(3)}\la^{(2)}}=0$.
(For more information on the Janzten sum formula, see \cite[Section~5.2]{Mathas}.)
\end{proof}

\begin{rem}
The above argument extends readily to proving that any Rouquier block for $e\geq 3$, $w\geq 4$ and $p \neq 3$ is Schurian-infinite (and there exist other partitions that will prove the case $p \neq 2$).
However, it is slightly cleaner, notationally, to handle this one case alone, and our other methods -- used to prove \cref{thm:mostblocks,thm:wlargemain} -- apply to a much broader collection of blocks.
\end{rem}

The following result holds for any parameters $e$ and $p$, although we only require it for our choice of $e=3$ and $p=2$.

\begin{lem} \label{lem:rest}
Let $\la,\mu \in B(\rho,w)$ be such that $\mu$ is $e$-regular and both $\la$ and $\mu$ have exactly $k$ removable $i$-nodes.
Let $\bar{\la}$ and $\bar{\mu}$ be the partitions obtained by removing $k$ nodes from $\la$ and $\mu$ respectively.
Suppose that $\bar{\la}$ and $\bar{\mu}$ both have exactly $k$ addable $i$-nodes.
Then $\bar{\mu}$ is $e$-regular and $d^{e,p}_{\la\mu}(1) \geq d^{e,p}_{\bar{\la}\bar{\mu}}(1)$.
\end{lem}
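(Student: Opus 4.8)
The statement relates graded (or rather, ungraded, as we evaluate at $v=1$) decomposition numbers $d^{e,p}_{\la\mu}(1)$ to those of the smaller partitions $\bar\la,\bar\mu$ obtained by removing $i$-nodes. The natural tool here is the theory of $i$-restriction functors on modules for the Hecke algebra, since removing $i$-nodes from partitions corresponds precisely to applying divided power restriction functors to the associated Specht and simple modules. The plan is to realise the inequality as a consequence of how the $k$-fold divided power $i$-restriction functor $e_i^{(k)}$ acts on Specht and simple modules, together with a comparison of multiplicities via the branching rules of Brundan--Kleshchev.

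First I would set up the restriction functor $f = e_i^{(k)}$, the $k$-th divided power of the $i$-restriction functor, which is exact. The key combinatorial inputs are the branching rules: the hypothesis that $\la$ has exactly $k$ removable $i$-nodes and $\bar\la$ has exactly $k$ addable $i$-nodes says that, in the crystal/branching picture, $\bar\la$ is obtained from $\la$ by removing a full ``$i$-string'' of length matching the number of removable nodes, so that $e_i^{(k)} \spe\la$ has $\spe{\bar\la}$ appearing with multiplicity $1$ (as the top composition factor in the appropriate filtration), and similarly $e_i^{(k)} \D\mu$ has $\D{\bar\mu}$ as a summand, with $\bar\mu$ $e$-regular, by the Brundan--Kleshchev branching rule for simple modules. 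The precise statement that $\bar\mu$ is $e$-regular and appears in $e_i^{(k)}\D\mu$ with multiplicity one is exactly the content I would cite from the modular branching rules; the condition on $k$ removable/addable nodes is what guarantees this multiplicity-one behaviour rather than a higher power.

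Next, I would compute the multiplicity $[\,e_i^{(k)}\spe\la : \D{\bar\mu}\,]$ in two ways. On one hand, since $e_i^{(k)}$ is exact, this multiplicity equals $\sum_\nu d^{e,p}_{\la\nu}(1)\,[\,e_i^{(k)}\D\nu : \D{\bar\mu}\,]$, summing over $e$-regular $\nu \in B(\rho,w)$. The term $\nu=\mu$ contributes $d^{e,p}_{\la\mu}(1)\cdot 1$ because $\D{\bar\mu}$ occurs in $e_i^{(k)}\D\mu$ with multiplicity $1$; all other terms are non-negative. On the other hand, $e_i^{(k)}\spe\la$ has a Specht filtration in which $\spe{\bar\la}$ occurs, and every composition factor $\D{\bar\mu}$ of $\spe{\bar\la}$ contributes at least its multiplicity $d^{e,p}_{\bar\la\bar\mu}(1)$ to the total. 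Comparing the two counts of $[\,e_i^{(k)}\spe\la : \D{\bar\mu}\,]$ gives
\[
d^{e,p}_{\la\mu}(1) \;\leq\; [\,e_i^{(k)}\spe\la : \D{\bar\mu}\,],
\qquad
d^{e,p}_{\bar\la\bar\mu}(1) \;\leq\; [\,e_i^{(k)}\spe\la : \D{\bar\mu}\,],
\]
but the correct direction of the desired inequality $d^{e,p}_{\la\mu}(1)\geq d^{e,p}_{\bar\la\bar\mu}(1)$ requires matching these more carefully, which is the crux of the argument.

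The main obstacle, and where I would spend the most care, is establishing the precise direction of the inequality: I need that $\D{\bar\mu}$ occurs in $e_i^{(k)}\D\mu$ with multiplicity \emph{exactly} one (not merely at least one), so that $\mu$ contributes $d^{e,p}_{\la\mu}(1)$ exactly to $[\,e_i^{(k)}\spe\la:\D{\bar\mu}\,]$, while simultaneously $\D{\bar\mu}$ occurs in $e_i^{(k)}\D\nu$ with multiplicity zero for the relevant other $\nu$ that contribute to $\spe{\bar\la}$ via the filtration — equivalently, I must control the adjointness so that the filtration multiplicity $d^{e,p}_{\bar\la\bar\mu}(1)$ is bounded above by $d^{e,p}_{\la\mu}(1)$. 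Concretely I would argue via Frobenius reciprocity and the induction functor $f_i^{(k)}$ adjoint to $e_i^{(k)}$: the hypotheses on addable/removable nodes make $f_i^{(k)}$ and $e_i^{(k)}$ behave like inverse bijections at the level of the crystal, forcing the multiplicity-one matching and thereby pinning down the inequality in the stated direction. The combinatorial verification that the removable/addable node conditions yield this clean branching behaviour is routine but must be done explicitly from the branching rules.
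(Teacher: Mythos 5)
Your approach contains a genuine gap, and it sits exactly where you flag ``the crux''. Restricting the Specht module $\spe\la$ cannot give the stated inequality without an extra input that the branching rules do not provide. Note first that under the hypotheses the situation is rigid: every addable $i$-node of $\mu$ remains addable in $\bar\mu$, and the $k$ vacated positions are also addable in $\bar\mu$, so the assumption that $\bar\mu$ has exactly $k$ addable $i$-nodes forces $\mu$ to have \emph{no} addable $i$-nodes (and likewise for $\la$), while $\bar\mu$ has no removable $i$-nodes. Consequently the Specht filtration of $e_i^{(k)}\spe\la$ has the \emph{single} factor $\spe{\bar\la}$, so $[e_i^{(k)}\spe\la : \D{\bar\mu}]$ is not merely bounded below by $d^{e,p}_{\bar\la\bar\mu}(1)$ -- it equals it. Moreover $\varepsilon_i(\mu)=k$ (no addable $i$-nodes means all removable $i$-nodes are normal), so Kleshchev's branching theorem gives $e_i^{(k)}\D\mu \cong \D{\bar\mu}$, whence $[e_i^{(k)}\D\mu:\D{\bar\mu}]=1$. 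Feeding this into exactness of $e_i^{(k)}$ yields
\[
d^{e,p}_{\bar\la\bar\mu}(1) \;=\; [e_i^{(k)}\spe\la : \D{\bar\mu}] \;=\; \sum_\nu d^{e,p}_{\la\nu}(1)\,[e_i^{(k)}\D\nu : \D{\bar\mu}] \;\geq\; d^{e,p}_{\la\mu}(1),
\]
which is the \emph{reverse} of the inequality you are trying to prove. To close the gap along your lines you would have to show $[e_i^{(k)}\D\nu : \D{\bar\mu}]=0$ for every other $e$-regular $\nu$ with $d^{e,p}_{\la\nu}(1)\neq 0$. Your justification -- that $e_i^{(k)}$ and $f_i^{(k)}$ ``behave like inverse bijections at the level of the crystal'' -- does not establish this: the modular branching rules control $\varepsilon_i$ and the socle/head of restrictions of simple modules, but they say nothing about interior composition factors, and determining those is in general as hard as determining decomposition numbers. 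So this vanishing is not a routine verification; it is the whole difficulty.

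The paper's proof avoids the problem by restricting a different module: the projective indecomposable $P(\mu)$ (this is the content of its one-line proof, ``apply $i$-Res to $P(\mu)$''). Restriction preserves projectivity, so $e_i^{(k)}P(\mu)$ is projective; by adjunction together with $f_i^{(k)}\D{\bar\mu}\cong\D\mu$ (which uses $\varphi_i(\bar\mu)=k$, i.e.\ the addable-node hypothesis on $\bar\mu$, and which also shows $\bar\mu$ is $e$-regular), the module $P(\bar\mu)$ is a direct summand of $e_i^{(k)}P(\mu)$. Now Brauer reciprocity turns decomposition numbers into Specht-filtration multiplicities of projectives, and every discarded term lands on the harmless side:
\[
d^{e,p}_{\bar\la\bar\mu}(1) \;=\; [P(\bar\mu) : \spe{\bar\la}] \;\leq\; [e_i^{(k)}P(\mu) : \spe{\bar\la}] \;=\; \sum_\nu d^{e,p}_{\nu\mu}(1)\,[e_i^{(k)}\spe\nu : \spe{\bar\la}] \;=\; d^{e,p}_{\la\mu}(1),
\]
where the final equality holds because $[e_i^{(k)}\spe\nu : \spe{\bar\la}]\neq 0$ forces $\nu$ to be $\bar\la$ plus a $k$-subset of addable $i$-nodes of $\bar\la$, and the hypothesis that $\bar\la$ has exactly $k$ such nodes leaves $\nu=\la$ as the only option, with multiplicity one. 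If you want to salvage your write-up, this is the fix: replace $\spe\la$ by $P(\mu)$; equivalently, your missing vanishing statement is the assertion that $P(\nu)$ is not a summand of $f_i^{(k)}P(\bar\mu)$ for the relevant $\nu$, which is the same projective-module argument in dual form. (As a side remark, combining your inequality with the paper's shows that under these hypotheses one actually has equality $d^{e,p}_{\la\mu}(1) = d^{e,p}_{\bar\la\bar\mu}(1)$.)
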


\begin{proof}
Apply the functor $i$-Res as described in \cite[Section~6.1]{Mathas} to the projective indecomposable module $P(\mu)$.
\end{proof}

\begin{prop}\label{prop:NearRouqblocks}
Suppose that $s \in \{[1,4,6], [1,3,6], [1,3,5]\}$.
We define the following four partitions in $B(s,4)$ by their $3$-quotients.
\begin{align*}
\la^{(1)} &= ((1^{2}),(1^2),\varnothing),
&\la^{(2)} &= ((1),(2,1),\varnothing),\\
\la^{(3)} &= ((1),(1^3),\varnothing),
&\la^{(4)} &= (\varnothing,(2,1^2),\varnothing).
\end{align*}
Then the partial decomposition matrix corresponding to these four partitions in both characteristic $0$ and characteristic $2$ is equal to (\ref{targetmatrixalt})
and hence $B(s,4)$ is Schurian-infinite.
\end{prop}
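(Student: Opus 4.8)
The plan is to prove Proposition~\ref{prop:NearRouqblocks} by leveraging the already-established result for the Rouquier block in \cref{prop:Rouqblocks}. The key observation is that the three Scopes classes $[1,4,6]$, $[1,3,6]$, and $[1,3,5]$ are `close' to the Rouquier class $[1,4,7]$, so I would expect the same four partitions (given by identical $3$-quotients) to produce the same submatrix (\ref{targetmatrixalt}). Since \cref{prop:matrixtrick} only requires the characteristic $0$ submatrix to be (\ref{targetmatrixalt}) together with the equalities $d_{\la\mu}^{3,2}(1) = d_{\la\mu}^{3,0}(1) \in \{0,1\}$, it suffices to verify these two ingredients in each of the three classes.

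\textbf{Characteristic $0$ computation.} First I would compute the characteristic $0$ partial decomposition matrix for each of the three Scopes classes, using the LLT algorithm as in \cref{prop:Rouqblocks}. I expect each to equal (\ref{targetmatrixalt}), exactly as in the Rouquier case. An alternative, and perhaps cleaner, route is to note that the four partitions have the same $3$-quotients as those in \cref{prop:Rouqblocks}, and to observe that these classes differ from the Rouquier block only by the relative bead counts controlled by $s_2 - s_1$; one could then attempt to transport the Rouquier computation across using branching, but a direct LLT computation in each class is the most transparent and I would favour it.

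\textbf{Passing to characteristic $2$.} The second and harder ingredient is controlling the characteristic $2$ decomposition numbers. Here the adjustment matrix argument of \cref{prop:Rouqblocks} does not apply verbatim, since \cite[Proposition~4.4]{jlm} governs adjustment matrix entries only for genuine Rouquier blocks. Instead I would use \cref{lem:rest} to relate the decomposition numbers in these `near-Rouquier' classes to those in the actual Rouquier block $[1,4,7]$. The strategy is: for the single potentially-problematic entry --- by the unitriangularity and $3$-quotient degree considerations, the only entry that could differ between characteristics $0$ and $2$ is $d_{\la^{(3)}\la^{(2)}}$, and I would need to show it remains $0$ --- I would exhibit a sequence of restrictions (removing $i$-nodes for suitable residues $i$) that carries the pair $(\la^{(3)}, \la^{(2)})$ in the given class to the corresponding pair in the Rouquier block $[1,4,7]$, where \cref{prop:Rouqblocks} already gives the vanishing $d_{\la^{(3)}\la^{(2)}}^{3,2}(1) = 0$. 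Applying \cref{lem:rest} at each step yields an inequality $d_{\la^{(3)}\la^{(2)}}^{3,2}(1) \geq d_{\bar\la^{(3)}\bar\la^{(2)}}^{3,2}(1)$; I must arrange the restrictions so that the inequality points the right way and forces the entry to vanish. In fact, one could also verify directly via the Jantzen sum formula, exactly as in \cref{prop:Rouqblocks}, that there is no $\sigma$ with $\la^{(3)} \triangleleft \sigma \triangleleft \la^{(2)}$ lying in the block, so that $J_{\la^{(3)}\la^{(2)}} = 0$ and hence $d_{\la^{(3)}\la^{(2)}}^{3,2}(1) = 0$; this may be the simplest uniform argument across all three classes.

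\textbf{The main obstacle.} The principal difficulty is ensuring that the $3$-quotient data really does define partitions lying in each of these three blocks, and that the dominance-order gap between $\la^{(3)}$ and $\la^{(2)}$ contains no intermediate partition $\sigma$ of the block in \emph{each} of the three classes --- the combinatorics of which partitions sit between them depends on the bead configuration, which changes as $s_1, s_2$ vary. If a direct Jantzen computation confirms the empty intermediate interval uniformly, the result follows immediately from the same reasoning as \cref{prop:Rouqblocks}; otherwise I would fall back on the restriction argument via \cref{lem:rest} anchored at the Rouquier block. Once the single entry $d_{\la^{(3)}\la^{(2)}}^{3,2}(1) = 0$ is secured in all three classes, the characteristic $2$ matrix coincides with (\ref{targetmatrixalt}), the hypotheses of \cref{prop:matrixtrick} hold, and Schurian-infiniteness of $B(s,4)$ follows.
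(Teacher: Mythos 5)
Your overall skeleton --- compute the characteristic $0$ matrices by the LLT algorithm, get lower bounds on the characteristic $2$ entries from the adjustment matrix, and get matching upper bounds from \cref{lem:rest} anchored at the Rouquier block --- is indeed the shape of the paper's proof, but two of your steps have genuine problems. The first is that the direction of your restriction argument is backwards. Removing nodes decreases $n$, and the Rouquier block $B([1,4,7],4)$ has a strictly larger core (hence larger $n$) than any of the blocks $[1,4,6]$, $[1,3,6]$, $[1,3,5]$; so no sequence of restrictions can carry pairs of partitions in these classes to pairs in the Rouquier block, as you propose. The usable direction is the opposite one: restrict \emph{from} the Rouquier block, so that \cref{lem:rest} gives $d^{3,2}_{\la\mu}(1) \geq d^{3,2}_{\bar\la\bar\mu}(1)$ with the barred partitions in the smaller class, i.e.\ upper bounds on the target entries by the known Rouquier entries, which then meet the adjustment-matrix lower bounds. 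Moreover, a single restriction step does not land in any of the three classes: restriction from $[1,4,7]$ lands in $[1,7,4]$ (or $[4,1,7]$), and one then needs the Scopes equivalences of \cref{prop:scopes}, namely $[7,1,4]=[1,4,6]$ and $[4,7,1]=[1,3,6]$, plus a further restriction step $[1,3,6]\to[1,6,3]$ and the Scopes equivalence $[6,1,3]=[1,3,5]$, to reach all three classes. This chain is the actual content of the paper's proof and is missing from yours.

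The second problem is that your reduction to ``the single potentially-problematic entry $d_{\la^{(3)}\la^{(2)}}$'' is unjustified, and you in effect contradict yourself on this point: you correctly observe that \cite[Proposition~4.4]{jlm} governs adjustment matrices only for genuine Rouquier blocks, but that proposition is exactly what justified the single-entry reduction in \cref{prop:Rouqblocks}. In a non-Rouquier block the identity $d^{e,p}_{\la\mu}(v) = d^{e,0}_{\la\mu}(v) + \sum_{\nu \triangleleft \mu} d^{e,0}_{\la\nu}(v)a_{\nu\mu}(v)$ involves all $\nu \triangleleft \mu$ in the block, so every entry of the $4\times 4$ submatrix needs an upper bound in characteristic $2$, not just one. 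For the same reason your preferred ``simplest uniform argument'' --- running the Jantzen sum formula for the one pair $(\la^{(3)},\la^{(2)})$ --- cannot close the proof even if the dominance gap does persist in these three blocks: it would control one decomposition number while leaving the remaining entries of the submatrix unverified. The sandwich argument via \cref{lem:rest} avoids this difficulty precisely because it bounds every entry of the submatrix simultaneously.
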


\begin{proof}
In fact, we prove the stronger claim that the proposition holds for all $s \in \{[1,4,7], [1,7,4],$ $[1,4,6], [4,1,7], [1,3,6], [1,6,3], [1,3,5]\}$.
We may easily verify using the LLT algorithm that the seven partial decomposition matrices are as stated in characteristic $0$, thus by Equation \ref{eqn:adj} we also have lower bounds on the entries in the matrices in characteristic $2$.
The case that $s=[1,4,7]$ is exactly \cref{prop:Rouqblocks}.
We first apply \cref{lem:rest} to obtain upper bounds on the entries for $[1,7,4]$, coming from those known decomposition numbers for the Rouquier block $[1,4,7]$, as above.
Since these agree with the lower bounds, the proposition holds for $[1,7,4]$; an identical argument shows it is true for $[4,1,7]$.
We apply Scopes equivalence to $[1,7,4]$ and $[4,1,7]$ respectively to show that it is true for $[7,1,4]=[1,4,6]$ and $[4,7,1]=[1,3,6]$.
We repeat the argument applying \cref{lem:rest} to $[1,3,6]$ to show the result holds for $[1,6,3]$; finally Scopes equivalence proves that it holds for $[6,1,3]=[1,3,5]$.
\end{proof}

We will need the following lemma in the proof of \cref{thm:wlargemain}

\begin{lem}\label{lem:wt3NearRouq}
Suppose $w=3$ and $s = [1,3,4]$ or $[1,2,3]$.
Define the following four partitions in $B(s,3)$ by their 3-quotients.
\begin{align*}
\la^{(1)} &= ((2),(1),\varnothing), 
&\la^{(2)} &= (\varnothing,(3),\varnothing),\\
\la^{(3)} &= ((1),(2),\varnothing), 
&\la^{(4)} &= ((1^2),(1),\varnothing).
\end{align*}
Then in both blocks, the submatrix corresponding to these four partitions is (\ref{targetmatrix}) in both characteristic $0$ and $2$.
\end{lem}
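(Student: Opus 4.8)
The plan is to compute the two $4\times 4$ submatrices directly and verify they match the target matrix \eqref{targetmatrix} in both characteristics, mirroring the strategy used in \cref{prop:Rouqblocks,prop:NearRouqblocks}. First I would confirm that the four partitions $\la^{(1)},\dots,\la^{(4)}$ all lie in the same weight $3$ block and are correctly ordered by dominance, so that a lower-unitriangular submatrix makes sense; since all four have $3$-quotients with total size $3$ and trivial third component, this is a routine check using the abacus. I would then settle the characteristic $0$ entries by running the LLT algorithm on each of the two Scopes classes $[1,3,4]$ and $[1,2,3]$, verifying that in each case the resulting $4\times 4$ submatrix is exactly \eqref{targetmatrix}.

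For the characteristic $2$ case, I expect to argue as in the proof of \cref{prop:NearRouqblocks}, exploiting the graded adjustment matrix identity \eqref{eqn:adj} together with a Scopes/row-removal reduction. The lower bounds on the characteristic $2$ entries come for free from \eqref{eqn:adj}, since the adjustment matrix has entries in $\mathbb{N}[v]$; the content is in establishing matching upper bounds. One natural route is to observe that $[1,2,3]$ is the weight $3$ Rouquier block (the analogue of the weight $4$ case handled in \cref{prop:Rouqblocks}), so I would apply the block-diagonal vanishing of adjustment entries across distinct $3$-quotient size profiles — via \cite[Proposition~4.4]{jlm} — to kill the off-diagonal adjustment contributions, exactly as in \cref{prop:Rouqblocks}. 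For $[1,3,4]$, I would try to transfer the Rouquier-block result across by \cref{lem:rest}, using $i$-restriction to obtain upper bounds from the known $[1,2,3]$ numbers, and then possibly a Scopes equivalence (\cref{prop:scopes}) if the two classes are related by a single runner swap.

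The main obstacle I anticipate is the same one that arose in \cref{prop:Rouqblocks}: showing that the relevant adjustment-matrix entry vanishes in characteristic $2$. Specifically, after reducing off-diagonal contributions via the $3$-quotient size argument, one is typically left with a single potentially-nonzero adjustment entry $a_{\la^{(3)}\la^{(2)}}(v)$ (or its analogue), and one must show it is $0$. I would handle this by the Jantzen sum formula argument used in \cref{prop:Rouqblocks}: check that there is no partition $\sigma$ strictly between $\la^{(3)}$ and $\la^{(2)}$ in the dominance order within the block, so that the Jantzen coefficient $J_{\la^{(3)}\la^{(2)}}$ vanishes, forcing $d^{3,2}_{\la^{(3)}\la^{(2)}}=0$ and hence the adjustment entry to be zero. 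The delicate part is verifying this dominance-interval emptiness and confirming that the particular adjustment entry is the only one that could disturb the target matrix; once that is in hand, the characteristic $2$ matrix agrees with the characteristic $0$ matrix and equals \eqref{targetmatrix}, as required.
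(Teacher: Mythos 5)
Your characteristic~$0$ step (LLT) agrees with the paper, but your characteristic~$2$ argument rests on a false premise: $[1,2,3]$ is \emph{not} the weight~$3$ Rouquier block. For $e=3$ and weight $w$, the Rouquier core has bead counts on consecutive runners differing by at least $w-1$, i.e.\ Scopes class $[1,w,2w-1]$; for $w=3$ this is $[1,3,5]$, and neither $[1,3,4]$ nor $[1,2,3]$ equals (or is Scopes equivalent to) it, since the triples with $s_1\leq w$, $s_2\leq s_1+w-1$ index \emph{distinct} Scopes classes. (The class $[1,2,3]$ is the Rouquier class for weight~$2$, which is likely the source of the confusion.) This is fatal to your plan: \cite[Proposition~4.4]{jlm} constrains adjustment matrix entries only for Rouquier blocks, so you cannot use it to kill the off-diagonal adjustment contributions in $B([1,2,3],3)$, and the downstream steps --- the Jantzen sum formula argument for the one surviving entry, and the transfer to $[1,3,4]$ via \cref{lem:rest} --- are all anchored to a block where the needed input is unavailable. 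A salvage along your lines would have to start at the genuine weight~$3$ Rouquier block $[1,3,5]$ and walk down through several applications of \cref{lem:rest} and \cref{prop:scopes}, checking at each stage that the four chosen partitions correspond correctly under the maps; this is considerably more work than you have indicated, and is not what the paper does.

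The paper's actual proof is much shorter and avoids adjustment matrices entirely: in characteristic~$0$ one can use LLT, and the invariance in characteristic~$2$ follows from the known description of weight~$3$ decomposition numbers in \cite{faytan06}; alternatively (the route the paper emphasizes), writing out the four partitions and applying \emph{column removal} (\cref{thm:rowremFock,thm:rowremDecomp}) identifies the submatrix in question with the one labelled by the weight~$2$ partitions $(7,1)$, $(6,2)$, $(4^2)$, $(4,2^2)$, which can be computed by hand in both characteristics. You should either adopt one of these routes or rebuild your argument starting from $[1,3,5]$.
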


\begin{proof}
Since $w=3$, we can prove this result by first using to LLT algorithm to compute the entries of the submatrix in characteristic $0$ and then using the results of \cite{faytan06} to show that it does not change in characteristic $2$.
In both cases however, if we write out the four partitions and apply column removal, we see that our submatrix is equal to the submatrix corresponding to the (weight 2) partitions
\begin{align*}
\mu^{(1)} & = (7,1), 
&\mu^{(2)}& = (6,2),\\
\mu^{(3)} &= (4^2), 
&\mu^{(4)} &= (4,2^2),
\end{align*}
and this submatrix can be computed by hand.
\end{proof}

\begin{rem}
The submatrix used above was missed by the authors in \cite{as22}, and a more difficult proof was used.
For the Scopes class $[1,3,4]$, we deduced its Schurian-infiniteness from its transpose class $[1,2,4]$, for which Schurian-infiniteness was proved simultaneously with the Rouquier block $[1,3,6]$.
However, the above significantly shortens the proof that $[1,2,3]$ is a Schurian-infinite Scopes class.
\end{rem}

\begin{thm}\label{thm:w4main}
Suppose $e=3$, $p=2$, $w = 4$, and let $s$ be a Scopes class for $w$.
Then the block $B(s,w)$ of $\hhh$ is Schurian-infinite.
\end{thm}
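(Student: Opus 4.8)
The plan is to combine \cref{thm:mostblocks32} with the three results just proved and a conjugation argument, so that no new decomposition-matrix computations are required. By \cref{thm:mostblocks32}, every weight $4$ block whose core $\rho$ satisfies $p_2 - p_1 \leq 2e$ is already known to be Schurian-infinite. After normalising via the Scopes equivalences of \cref{prop:scopes} to the canonical form $[1,s_1,s_2]$, the blocks that remain are exactly the nine Scopes classes
\[
[1,1,3],\ [1,1,4],\ [1,2,4],\ [1,2,5],\ [1,3,5],\ [1,3,6],\ [1,4,1],\ [1,4,6],\ [1,4,7]
\]
singled out in the remark following \cref{thm:mostblocks32}, namely those with $p_2 - p_1 > 2e$. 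First I would observe that four of these, $[1,4,7]$, $[1,4,6]$, $[1,3,6]$ and $[1,3,5]$, are precisely the classes for which \emph{also} $p_1 - p_0 > 2e$, and these are dispatched directly: \cref{prop:Rouqblocks} handles the Rouquier class $[1,4,7]$, and \cref{prop:NearRouqblocks} handles the remaining three. It then remains only to treat the five classes $[1,1,3]$, $[1,1,4]$, $[1,2,4]$, $[1,2,5]$ and $[1,4,1]$, all of which satisfy $p_1 - p_0 < 2e$.

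For these five I would use the conjugation isomorphism $B(\rho,w) \cong B(\rho',w)$ induced by the $\#$-automorphism, exactly as in \cite[Corollary~6.5]{as22} and as recorded before \cref{thm:mostblocks32}. The key point is that passing from a $3$-core to its conjugate interchanges the two gaps, sending $(p_1 - p_0,\, p_2 - p_1)$ to $(p_2 - p_1,\, p_1 - p_0)$; concretely a direct abacus computation gives
\[
[1,1,3]' = [1,3,3],\quad [1,1,4]' = [1,4,4],\quad [1,2,4]' = [1,3,4],\quad [1,2,5]' = [1,4,5],\quad [1,4,1]' = [1,4,3].
\]
Since each of the five starting classes has $p_1 - p_0 < 2e$, each conjugate class has $p_2 - p_1 < 2e$, so by \cref{thm:mostblocks32} the conjugate block is Schurian-infinite, and hence so is the original. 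Together with the preceding paragraph this establishes the result for all nine remaining classes, and so for every weight $4$ Scopes class.

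The one genuinely computational step, and the main obstacle, is verifying the conjugate classes above, or equivalently the gap-swapping behaviour of conjugation on $3$-cores. I would carry this out on the abacus: for each of the five classes write down a beta-set for its core, pass to the conjugate partition by transposing (equivalently, complementing and reversing the beta-set), and then read off the runner data $p_0 < p_1 < p_2$ of the resulting $3$-core to confirm that $p_2 - p_1$ equals the original $p_1 - p_0$, and in particular is strictly smaller than $2e$. This is a finite, routine check, but it is where care is needed, since the normalised label $[1,s_1,s_2]$ can shift with the choice of bead number $r$ and one must track the cyclic reordering of the runners correctly when returning to canonical form; for the proof itself only the intrinsic inequality $p_2 - p_1 < 2e$ for the conjugate core is needed, so the displayed labels serve only to make the reduction transparent.
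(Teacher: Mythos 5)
Your proposal is correct and follows essentially the same route as the paper's own proof: \cref{thm:mostblocks32} for all but the nine exceptional Scopes classes, \cref{prop:Rouqblocks} for $[1,4,7]$, \cref{prop:NearRouqblocks} for $[1,3,5]$, $[1,3,6]$, $[1,4,6]$, and conjugation (via the $\#$-automorphism) sending the remaining five classes to exactly the conjugate classes $[1,3,3]$, $[1,4,4]$, $[1,3,4]$, $[1,4,5]$, $[1,4,3]$ that the paper also records, all of which fall under \cref{thm:mostblocks32}. Your added observation that conjugation swaps the two runner gaps, so that $p_1-p_0<2e$ for the original forces $p_2-p_1<2e$ for the conjugate, is a correct and slightly more explicit justification of the step the paper leaves implicit.
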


\begin{proof}
All but nine Scopes classes are handled by \cref{thm:mostblocks32}, as discussed in the remark below it.
The Rouquier block $[1,4,7]$ is handled in \cref{prop:Rouqblocks}, while $[1,3,5]$, $[1,3,6]$, and $[1,4,6]$ are dealt with in \cref{prop:NearRouqblocks}.
The remaining Scopes classes $[1,1,3]$, $[1,1,4]$, $[1,2,4]$, $[1,2,5]$, and $[1,4,1]$ are conjugate to the classes $[1,3,3]$, $[1,4,4]$, $[1,3,4]$, $[1,4,5]$, and $[1,4,3]$, respectively, and are thus Morita equivalent to blocks we've already shown to be Schurian-infinite.
\end{proof}

\begin{thm}\label{thm:wlargemain}
Suppose $e=3$, $p=2$, $w \geq 5$, and let $s$ be a Scopes class for $w$.
Then the block $B(s,w)$ of $\hhh$ is Schurian-infinite.
\end{thm}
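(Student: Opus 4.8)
The plan is to peel off the cases already settled and reduce the rest to weights that have been handled. By \cref{thm:mostblocks32}, the only blocks left open for $e=3$, $p=2$, $w\geq 5$ are those whose core $\rho$ satisfies $p_2-p_1>2e$, so I would fix such a block $B(\rho,w)$ and reduce it, by row-removal, down to one of the weight $3$ or weight $4$ cases resolved in \cref{lem:wt3NearRouq,prop:Rouqblocks,prop:NearRouqblocks}. Those results already provide the submatrices (\ref{targetmatrix}) and (\ref{targetmatrixalt}) in both characteristic $0$ and characteristic $2$, so the work is to transport them up to weight $w$ and then invoke \cref{prop:matrixtrick}.

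Concretely, exactly as in the proof of \cref{thm:mostblocks}, I would take four partitions whose abacus displays agree with that of $\rho$ except that the lowest bead has been slid down a further $w-3$ (or $w-4$) places, so that removing this bead is precisely the removal of a common first row. Then \cref{thm:rowremFock}(i) identifies the resulting partial decomposition matrix in characteristic $0$, and \cref{thm:rowremDecomp} does the same for the ungraded numbers $d^{3,2}_{\lambda\mu}(1)$, with that of the four reduced partitions in a block of weight $3$ or $4$. The reason to stop at weight $3$ or $4$, rather than continuing down to weight $2$ as in \cref{thm:mostblocks}, is that the weight $2$ reduction of a block with $p_2-p_1>2e$ lands in the weight $2$ Rouquier (or $[1,1,2]$) block, where no usable submatrix exists; by contrast the near-Rouquier weight $3$ and weight $4$ blocks of \cref{lem:wt3NearRouq,prop:NearRouqblocks} do carry (\ref{targetmatrix}) and (\ref{targetmatrixalt}).

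The bulk of the argument is a case analysis that records, for each remaining Scopes class $[1,s_1,s_2]$, which reduced block the row-removal produces and confirms that it is one we have already treated. I would stratify by the finer gap data, namely the size of $p_1-p_0$ relative to $e$ and $2e$ together with the residue of $p_2-p_1$, so that the reduced core is forced into one of $[1,2,3]$, $[1,3,4]$ at weight $3$, or one of $[1,4,7]$, $[1,4,6]$, $[1,3,6]$, $[1,3,5]$ together with their conjugates and Scopes equivalents at weight $4$. The normalisation $s_1\leq w$, $s_2\leq s_1+w-1$ from \cref{prop:scopes}, combined with the conjugation symmetry $B(\rho,w)\cong B(\rho',w)$, should reduce the genuinely distinct cases to a short list, and the freedom to choose the target weight ($3$ versus $4$) and the precise slide gives enough flexibility always to land in a handled block.

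Once the four weight-$w$ partitions are matched with the reduced weight $3$ or weight $4$ partitions, the submatrix (\ref{targetmatrix}) or (\ref{targetmatrixalt}) holds in characteristic $0$ by \cref{thm:rowremFock}(i) and the relevant weight $3$/$4$ result, while \cref{thm:rowremDecomp} gives $d^{3,2}_{\lambda\mu}(1)=d^{3,0}_{\lambda\mu}(1)\in\{0,1\}$; \cref{prop:matrixtrick} then yields that $B(\rho,w)$ is Schurian-infinite. The main obstacle I anticipate is precisely the bookkeeping of the previous paragraph: verifying for every remaining Scopes class and every $w\geq 5$ that the chosen slide reduces to a handled block rather than slipping into an unhandled or bad (weight $2$ Rouquier) one, and checking that the enlarged partitions remain $e$-regular where \cref{prop:matrixtrick} requires it. Getting the weight $3$ versus weight $4$ dichotomy right across all the gap configurations is the delicate step.
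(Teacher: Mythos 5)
Your proposal is correct and follows essentially the same route as the paper: dispatch all classes covered by \cref{thm:mostblocks32}, use conjugation of cores to halve the remaining cases, then slide the lowest bead down $w-3$ or $w-4$ places and apply the row-removal results \cref{thm:rowremFock,thm:rowremDecomp} to land the four chosen partitions in the handled weight~$3$ block $[1,3,4]$ or the weight~$4$ blocks $[1,4,6]$/$[1,4,7]$, finishing with \cref{prop:matrixtrick}. The paper carries out the bookkeeping you defer by splitting on the value of $s_2-s_1$ (respectively $s_1-s_2$), with explicit partitions in each case, but the strategy and all key ingredients coincide with yours.
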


\begin{proof}
First, suppose the Scopes class is $[1,s_1,s_2]$, with $s_1 \leq s_2$.
Note that $[1,s_1,s_2]' = [1,s_2-s_1,s_2]$.
Since $B(s,w)$ and $B(s',w)$ are Morita equivalent, it is sufficient to consider the cases where $s_2-s_1 \leq s_1-1$ so we assume further that this identity holds.

If $s_2-s_1 \leq 1$, the result follows immediately by \cref{thm:mostblocks32}.

So suppose $s_2-s_1 = 2$, so that $s_1 \geq 3$.
Then we define partitions
\begin{align*}
\la^{(1)} &= ((w-3,2),(1),\varnothing), 
&\la^{(2)} &= ((w-3),(3),\varnothing),\\
\la^{(3)} &= ((w-3,1),(2),\varnothing), 
&\la^{(4)} &= ((w-3,1^2),(1),\varnothing).
\end{align*}
By row-removal \cref{thm:rowremFock,thm:rowremDecomp}, the relevant submatrix of the decomposition matrix matches that of the four partitions obtained by removing the first row from each, or in other words removing the lowest bead from the abacus display of each.
The remaining partitions are then in the block $B([1,s_1,s_1+1], 3)$, which is Scopes equivalent to $B([1,3,4], 3)$ for any $s_1 \geq 3$.
The remaining partitions are precisely those used in \cref{lem:wt3NearRouq}, and the result follows, by \cref{prop:matrixtrick}.

Next, suppose $s_2-s_1=3$, so that we also have $s_1 \geq 4$.
Then we define partitions
\begin{align*}
\la^{(1)} &= ((w-4,1^2),(1^2),\varnothing), 
&\la^{(2)} &= ((w-4,1),(2,1),\varnothing),\\
\la^{(3)} &= ((w-4,1),(1^3),\varnothing), 
&\la^{(4)} &= ((w-4),(2,1^2),\varnothing).
\end{align*}
Arguing as before, removing the first row yields partitions in the block $B([1,s_1,s_1+2], 4)$, which is Scopes equivalent to $B([1,4,6], 4)$ for any $s_1 \geq 4$.
The remaining partitions are precisely those used in \cref{prop:NearRouqblocks}, and the result follows.

Now we suppose that $s_2-s_1\geq 4$, so that we also have $s_1 \geq 5$.
Then, taking the exact same partitions as in the previous case, and performing row removal as before now yields partitions in the block $B([1,s_1,s_2-1], 4)$, which is Scopes equivalent to $B([1,4,7], 4)$.
Since the remaining partitions are still those used in \cref{prop:Rouqblocks}, the result follows once more.
This completes the proof for all Scopes classes $[1,s_1,s_2]$ for $s_2 \geq s_1$.

We now assume that $s_2<s_1$.
Note that $[1,s_1,s_2]' = [1,s_1,s_1-s_2]$, so it suffices to assume that $s_2 \geq s_1-s_2$.

If $s_1-s_2 \leq 2$, the result follows immediately by \cref{thm:mostblocks32}.

Next, suppose $s_1-s_2=3$, so that $s_2 \geq 3$.
Then we define partitions
\begin{align*}
\la^{(1)} &= ((w-3,2),(1),\varnothing), 
&\la^{(2)} &= ((w-3),(3),\varnothing),\\
\la^{(3)} &= ((w-3,1),(2),\varnothing), 
&\la^{(4)} &= ((w-3,1^2),(1),\varnothing).
\end{align*}
Arguing as before, removing the first row yields partitions in the block $B([1,s_1-1,s_2], 3)$, which is Scopes equivalent to $B([1,3,4], 3)$.
The remaining partitions are precisely those used in \cref{lem:wt3NearRouq}, and the result follows.

Finally, we suppose that $s_1-s_2\geq4$ so that $s_2 \geq 4$.
Then we define partitions
\begin{align*}
\la^{(1)} &= ((w-4,1^2),(1^2),\varnothing), 
&\la^{(2)} &= ((w-4,1),(2,1),\varnothing),\\
\la^{(3)} &= ((w-4,1),(1^3),\varnothing), 
&\la^{(4)} &= ((w-4),(2,1^2),\varnothing).
\end{align*}

Arguing as before, removing the first row yields partitions in the block $B([1,s_1-1,s_2], 4)$, which is Scopes equivalent to $B([1,s_1-s_2+3,4], 4)$.
In turn, this is Scopes equivalent to $B([1,4,6], 4)$ if $s_1 - s_2=4$ or $B([1,4,7], 4)$ if $s_1 - s_2\geq5$.
The remaining partitions are precisely those used in both of \cref{prop:NearRouqblocks,prop:Rouqblocks}, and the result follows.
\end{proof}

\bibliographystyle{amsalpha}  
\addcontentsline{toc}{section}{\refname}
\bibliography{master}

\end{document}